\begin{document}
\newtheorem{thm}{Theorem}
\newtheorem{cor}[thm]{Corollary}
\newtheorem{conj}[thm]{Conjecture}
\newtheorem{lemma}[thm]{Lemma}
\newtheorem{prop}{Proposition}
\newtheorem{problem}[thm]{Problem}
\newtheorem{remark}[thm]{Remark}
\newtheorem{defn}[thm]{Definition}
\newtheorem{ex}[thm]{Example}

\newcommand{\mR}{{\mathbb R}}
\newcommand{\mD}{{\mathbb D}}
\newcommand{\mE}{{\mathbb E}}  % this is for expectation -- we can change
\newcommand{\E}{{\mathbb E}}
\newcommand{\cN}{{\mathcal N}}
\newcommand{\cR}{{\mathcal R}}
\newcommand{\cS}{{\mathcal S}}
\newcommand{\cC}{{\mathcal C}}
\newcommand{\cU}{{\mathcal U}}
\newcommand{\diag}{\operatorname{diag}}
\newcommand{\tr}{\operatorname{trace}}
\newcommand{\f}{{\mathfrak f}}
\newcommand{\g}{{\mathfrak g}}
\newcommand{\range}{\cR}  %{\operatorname{range}}
\newcommand{\trace}{\operatorname{trace}}
\newcommand{\argmin}{\operatorname{argmin}}

\newcommand{\ignore}[1]{}
\IEEEoverridecommandlockouts
\overrideIEEEmargins

\def\spacingset#1{\def\baselinestretch{#1}\small\normalsize}
\setlength{\parskip}{10pt}
\setlength{\parindent}{20pt}
\spacingset{.95}

\newcommand{\mike}{\color{magenta}}
\newcommand{\mmike}{\color{blue}}
\definecolor{grey}{rgb}{0.6,0.6,0.6}
\definecolor{lightgray}{rgb}{0.97,.99,0.99}

\title{Steering state statistics with output feedback}

\author{Yongxin Chen, Tryphon T. Georgiou and Michele Pavon\thanks{Y.\ Chen and T.T.\ Georgiou are with the Department of Electrical and Computer Engineering,
University of Minnesota, Minneapolis, Minnesota MN 55455, USA; {email: \{chen2468,tryphon\}@umn.edu}}
\thanks{M.\ Pavon is with the Dipartimento di Matematica,
Universit\`a di Padova, via Trieste 63, 35121 Padova, Italy; {email: pavon@math.unipd.it}}\thanks{The research was supported in part by the NSF under Grant 1027696 and the AFOSR under Grants FA9550-12-1-0319 and FA9550-15-1-0045}}

\maketitle
{\begin{abstract}
Consider a linear stochastic system
whose initial state is a random vector with a specified Gaussian distribution.
Such a distribution may represent a collection of particles abiding by the specified
system dynamics. In recent publications, we have shown that, provided the system is controllable,
it is always possible to steer the state covariance to any specified terminal Gaussian distribution
using {\em state feedback}.
The purpose of the present work is to show that, in the case where only partial state observation is available,
a necessary and sufficient condition for being able to steer the system to a specified terminal Gaussian distribution for the state vector is that the terminal state covariance be greater (in the positive-definite sense) than the error covariance of a corresponding Kalman filter.
\end{abstract}}

\noindent{\bf Keywords:}
Linear stochastic systems, stochastic control, covariance control, Kalman filter.

\section{Introduction}
The classical paradigm of stochastic optimal control is to regulate the response of system in such a way so as to minimize a prescribed performance index. The quality of the response in terms of tracking reference signals and/or reaching a prescribed destination point is encoded in the performance index which penalizes deviation from desirable mean-value response. The effect of state uncertainty at the starting point, and of stochastic disturbances and measurement noise, is that sample paths of state and output processes incur a certain amount of spread. The role of the performance index is precisely to limit this spread, indirectly, as a result of the optimal strategy that keeps the cost low and hence curtails deviation from the desired mean response.

The viewpoint presented here is based on recent work by the authors \cite{CheGeoPav14a,CheGeoPav14b} and departs substantially from this classical recipe and aims to specify directly the spread of the state-vector.  Thus, in this work, we first considered the question of whether specific state-distributions are attainable over a finite or infinite time-interval through (noise-free) state feedback. In the present, we address for the first time the control of the state-distribution, over a finite or infinite interval, via output feedback in the presence of measurement noise. It turns out that, our ability to steer the state-distribution, as compared to what is possible by noise-free state feedback, is only limited by an inequality of admissible state-covariances to exceed the error covariance of a corresponding Kalman filter.

For motivation and background we refer to \cite{CheGeoPav14a,CheGeoPav14b} as well as to the largely expository  paper \cite{CheGeoPav15c} which has also been submitted in these proceedings (CDC 2015).
%which is largely based on the work in  \cite{CheGeoPav14a,CheGeoPav14b}.

%{\mmike PERHAPS A BIT MORE\ldots also adding refs to our other papers.}

\section{Finite horizon steering}\label{sec:finitehorizon}
Consider the linear time-invariant (LTI) system
    \begin{subequations}\label{eq:dynamics}
    \begin{eqnarray}
    dx(t)&=&Ax(t)dt+Bu(t)dt+B_1dw(t)\\
    dy(t)&=&Cx(t)dt+Ddv(t)
    \end{eqnarray}
    \end{subequations}
%where $A\in\mR^{n\times n}, B\in\mR^{n\times m}, B_1\in\mR^{n\times m_1}, C\in\mR^{p\times n}, D\in\mR^{p\times p}$,
where
\[
(A,B,B_1,C,D)\in\mR^{n\times n}\times\mR^{n\times m}\times\mR^{n\times m_1}\times\mR^{p\times n}\times\mR^{p\times p},
\]
$(A,B)$ is controllable, $(A,C)$ is observable, $D$ is invertible, and $x,y,u,w,v$ represent the state, output, control input, process noise, and measurement noise respectively.
We assume that $w$ and $v$ are both standard Wiener processes and independent of each other.
We assume that at $t=0$ the state, $x(0)$, is Gaussian with mean equal to zero and covariance $\Sigma_0>0$, i.e., having probability density
\begin{equation}\label{initial}\rho_0(x)=(2\pi)^{-n/2}\det (\Sigma_0)^{-1/2}\exp\left(-\frac{1}{2}x'\Sigma_0^{-1}x\right).
\end{equation}
The assumption on having zero-mean is made only for simplicity of the exposition and can be easily removed.

Our goal is to steer the state distribution using dynamic output feedback
to a ``target'' Gaussian end-point distribution
 \begin{equation}\label{final}
 \rho_T(x)=(2\pi)^{-n/2}\det (\Sigma_T)^{-1/2}\exp\left(-\frac{1}{2}x'\Sigma_T^{-1}x\right),
\end{equation}
for the state vector, with $\Sigma_T$ a given symmetric and  positive definite $n\times n$ matrix.

\subsection{Feasibility conditions}
We first determine conditions on the terminal state covariance $\Sigma_T$ that permit the existence of control that steers the stochastic system to the corresponding end-point state density \eqref{final}.

Consider state estimates provided by the Kalman filter
    \begin{equation}\label{eq:kalmanfilter}
        d\hat{x}(t)=A\hat{x}(t)dt+Bu(t)dt+L(t)(dy-C\hat{x}dt),
    \end{equation}
with (optimal) gain
    \[
        L(t)=P(t)C'(DD')^{-1}
    \]
and $P(t)$ the state error-covariance obtained by
solving the differential Riccati equation
    \begin{equation}\label{eq:Riccati}
        \dot{P}(t)=AP(t)+P(t)A'+B_1B_1'-P(t)C'(DD')^{-1}CP(t)
    \end{equation}
with initial condition $P(0)=\Sigma_0$. As usual, we denote by $\tilde{x}(t)=x(t)-\hat{x}(t)$ the estimation error, which satisfies
    \[
        d\tilde{x}(t)=(A-L(t)C)\tilde{x}(t)dt+B_1dw(t)-L(t)Ddv(t)
    \]
and is orthogonal to $\hat x(t)$, i.e., $\mE(\hat x(t)\tilde x(t)')=0$.
It follows that
    \begin{eqnarray}\label{eq:optimalityofKF}
        \Sigma(T)&:=& \mE(x(T)x(T)')\\\nonumber
        &=&\mE((\hat{x}(T)+\tilde{x}(T))(\hat{x}(T)'+\tilde{x}(T)'))\\\nonumber
        &=&\mE(\tilde{x}(T)\tilde{x}(T)')+\mE(\hat{x}(T)\hat{x}(T)')\\\nonumber
        &=&P(T)+\mE(\hat{x}(T)\hat{x}(T)')\ge P(T).
    \end{eqnarray}
Therefore,
\begin{equation}\label{eq:necessary}
\Sigma_T\geq P(T)
\end{equation}
is a {\em necessary condition} for a terminal state covariance to be ``reachable'' through suitable steering of the system dynamics. Our first result states that the {\em strict inequality is $\Sigma_T>P(T)$ is in fact sufficient}. This relies on \cite[Theorem 3]{CheGeoPav14b} which establishes a ``controllability'' result for a matrix differential Lyapunov equation. We note that the above argument on the necessity of \eqref{eq:necessary} does not assume any particular form for the functional dependence of the control input $u$ on the output $y$. Yet, in the proof of the theorem below it is seen that, under the slightly stronger condition $\Sigma_T>P(T)$, a control input of the form $u=-K\hat x$ is sufficient to ensure the terminal distribution of $x(T)$.

\begin{thm}\label{thm:thm1}
Given the stochastic linear system \eqref{eq:dynamics} with distribution for the state vector at $t=0$ specified by
\eqref{initial}, and given
\begin{equation}\label{eq:sufficient}
\Sigma_T>P(T)
\end{equation}
 where $P(t)$ satisfies the Riccati equation \eqref{eq:Riccati} with initial condition $P(0)=\Sigma_0$, there exists a control process $u(t)$, adapted to the output process $y(t)$, such that the distribution of the state vector at $t=T$ is given by the density in \eqref{final}.
\end{thm}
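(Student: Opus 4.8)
The plan is to exploit the orthogonal decomposition $x = \hat{x} + \tilde{x}$ supplied by the Kalman filter and thereby reduce the output-feedback problem to the noise-free \emph{state}-feedback covariance steering of the estimate $\hat{x}$, which is adapted to $y$ and hence available for feedback. First I would record that, since $\hat{x}(T)$ and $\tilde{x}(T)$ are orthogonal and jointly Gaussian, and since the error covariance $\mE(\tilde{x}(T)\tilde{x}(T)') = P(T)$ is fixed by the filter independently of the control, the identity $\Sigma(T) = \mE(\hat{x}(T)\hat{x}(T)') + P(T)$ shows that achieving $\Sigma(T) = \Sigma_T$ is equivalent to steering $Q(t) := \mE(\hat{x}(t)\hat{x}(t)')$ to the value $Q(T) = \Sigma_T - P(T)$, which is positive definite precisely by hypothesis \eqref{eq:sufficient}. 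I would also note the boundary value at the origin: since $\Sigma(0) = \Sigma_0 = P(0)$, the decomposition forces $Q(0) = 0$.

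Next I would recast the filter \eqref{eq:kalmanfilter} as a controllable linear stochastic system in $\hat{x}$. Writing the innovations process $d\nu = dy - C\hat{x}\,dt = C\tilde{x}\,dt + D\,dv$, which is a Wiener process with covariance $\mE(d\nu\,d\nu') = DD'\,dt$, the filter becomes
\begin{equation*}
d\hat{x}(t) = (A\hat{x}(t) + Bu(t))\,dt + L(t)\,d\nu(t),
\end{equation*}
a linear controlled diffusion driven by white noise of intensity $L(t)DD'L(t)' = P(t)C'(DD')^{-1}CP(t)$. Because $(A,B)$ is controllable and $\hat{x}$ is adapted to $y$, this is exactly the setting of state-feedback covariance control treated in \cite{CheGeoPav14b}, now with a time-varying noise intensity.

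I would then close the loop with $u = -K(t)\hat{x}$, under which $Q(t)$ obeys the matrix differential Lyapunov equation
\begin{equation*}
\dot{Q}(t) = (A - BK(t))Q(t) + Q(t)(A - BK(t))' + P(t)C'(DD')^{-1}CP(t).
\end{equation*}
Invoking the controllability result \cite[Theorem 3]{CheGeoPav14b} for this Lyapunov equation, the controllability of $(A,B)$ together with the target $Q(T) = \Sigma_T - P(T) > 0$ yields a gain $K(t)$ steering $Q(0) = 0$ to $Q(T) = \Sigma_T - P(T)$. With this feedback $\hat{x}(T)$ is Gaussian with zero mean and covariance $\Sigma_T - P(T)$; adding the independent Gaussian $\tilde{x}(T)$ gives $x(T)$ Gaussian with zero mean and covariance $\Sigma_T$, i.e.\ the density \eqref{final}.

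The hard part is justifying the application of the Lyapunov controllability theorem in this reduced problem: one must check that steering from the degenerate initial covariance $Q(0) = 0$, under the time-varying intensity $P(t)C'(DD')^{-1}CP(t)$, to the positive-definite target $\Sigma_T - P(T)$ falls within the scope of \cite[Theorem 3]{CheGeoPav14b} and produces a bounded gain $K(t)$. This is exactly where the strict inequality \eqref{eq:sufficient} enters: it guarantees $Q(T) > 0$, keeping the construction in the regime where the controllability result delivers a well-defined feedback, whereas the merely necessary condition $\Sigma_T \ge P(T)$ could force $Q(T)$ to be singular and the gain to blow up.
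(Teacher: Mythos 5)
Your reduction is exactly the paper's: control through the Kalman estimate $u=-K(t)\hat x$, observe that the error covariance $P(t)$ is control-independent, and steer $Q(t)=\mE(\hat x\hat x')=\Sigma(t)-P(t)$ via the matrix Lyapunov equation $\dot Q=(A-BK)Q+Q(A-BK)'+L(t)DD'L(t)'$ to the target $\Sigma_T-P(T)>0$, invoking \cite[Theorem 3]{CheGeoPav14b}. However, the step you flag as ``the hard part'' and leave unverified is precisely the technical content of the paper's proof, and it does not come for free: \cite[Theorem 3]{CheGeoPav14b} requires \emph{both} boundary covariances to be positive definite (the constructed gain is $K=U'Q^{-1}$, which is meaningless where $Q$ is singular), whereas here $Q(0)=0$ identically, since $\Sigma(0)=P(0)=\Sigma_0$. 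The strict inequality $\Sigma_T>P(T)$ fixes the terminal end, but it does nothing for the degenerate initial end, so your invocation of the theorem is not justified as it stands; honestly flagging the gap does not close it.

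The paper closes this gap with a ``burn-in'' argument that your proposal never touches: set $K\equiv 0$ on an initial window $[0,\epsilon)$, so that $Q$ obeys $\dot Q=AQ+QA'+LDD'L'$ with $Q(0)=0$, and prove that $Q(t)>0$ for every $t>0$; then apply \cite[Theorem 3]{CheGeoPav14b} on $[\epsilon,T]$ with the now positive-definite initial condition $Q(\epsilon)$. The positivity proof is where observability of $(A,C)$ --- a standing assumption your argument never uses, which is itself a warning sign --- enters: writing $Q=\Sigma-P$ with $\Sigma$ the solution of the uncontrolled Lyapunov equation $\dot\Sigma=A\Sigma+\Sigma A'+B_1B_1'$ and $P$ the Riccati solution, one compares the flows of the inverses $\Sigma^{-1}$ and $P^{-1}$ and obtains
\begin{equation*}
e^{A't}\left(P(t)^{-1}-\Sigma(t)^{-1}\right)e^{At}=\int_0^t e^{A'\tau}C'(DD')^{-1}Ce^{A\tau}\,d\tau+O(t^2),
\end{equation*}
where the integral term is $O(t)$ and strictly positive for $t>0$ by observability of $(A,C)$, so it dominates for small $t$. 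Hence $\Sigma(t)>P(t)$, i.e.\ $Q(t)>0$, for all small $t>0$, and positivity then persists for larger $t$ by the Lyapunov dynamics. Without this argument (or some substitute for escaping the singular initial condition), your proof is incomplete.
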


\begin{proof}
Consider a control $u=-K(t)\hat{x}(t)$ where $\hat{x}(t)$ is the state of the Kalman filter and $K(t)$ a time-varying gain matrix. The state covariance of the combined state + estimation error
%system$+$filter
system
    \begin{eqnarray*}
        \left[\begin{array}{c}dx\\d\tilde{x}\end{array}\right]&=&
        \left[\begin{array}{cc}A-BK & BK\\ 0 & A-LC\end{array}\right]
        \left[\begin{array}{c}x\\\tilde{x}\end{array}\right]dt\\&&+
        \left[\begin{array}{c}B_1dw\\B_1dw-LDdv\end{array}\right],
    \end{eqnarray*}
satisfies the matrix differential Lyapunov equation
    \begin{eqnarray*}
    \left[\begin{array}{cc}\dot{\Sigma} & \dot{P}\\\dot{P} & \dot{P}\end{array}\right]&=&
    \left[\begin{array}{cc}A-BK & BK\\ 0 & A-LC\end{array}\right]
    \left[\begin{array}{cc}\Sigma & P\\P & P\end{array}\right]\\&&+
    \left[\begin{array}{cc}\Sigma & P\\P & P\end{array}\right]
    \left[\begin{array}{cc}A-BK & BK\\ 0 & A-LC\end{array}\right]'\\
    &&+\left[\begin{array}{cc}B_1B_1' & B_1B_1'\\ B_1B_1' & B_1B_1'+LDD'L'\end{array}\right].
    \end{eqnarray*}
Denote by $\hat{\Sigma}:=\Sigma-P$ the error covariance of the Kalman filter state. From the above it readily follows that
    \begin{equation}\label{eq:covdifference}
    \dot{\hat\Sigma}=(A-BK)\hat{\Sigma}+\hat{\Sigma}(A-BK)'+LDD'L' .
    \end{equation}
Since $\Sigma(t)=\hat{\Sigma}(t)+P(t)$ for all $t$, it suffices to steer $\hat\Sigma(t)$ to a terminal value $\hat{\Sigma}(T)=\Sigma_T-P(T)>0$ with a suitable choice of $K(t)$
over $[0,T]$.

The claim of the theorem now basically follows from \cite[Theorem 3]{CheGeoPav14b} which states that
a differential Lyapunov equation
\begin{equation}
\nonumber
\dot{Q}=AQ+QA'+BU(t)'+U(t)B'
\end{equation}
is controllable, i.e., $Q(t)$ can be steered by a proper choice of $U(t)$ between any two conditions at $t=0$ and $t=T$, if and only if the system \eqref{eq:dynamics} is controllable, and moreover, the path $Q(t)$ for $t\in[0,T]$ can remain within the cone of positive definite matrices provided the boundary conditions $Q(0)$ and $Q(T)$ are. The corresponding value for $K(t)$ is $U(t)'Q(t)^{-1}$. The only technical issue we need to address is that the conditions in \cite[Theorem 3]{CheGeoPav14b} require that the initial covariance be positive definite, while here, the initial condition for \eqref{eq:covdifference} is
 $\hat\Sigma(0)=0$. To this end, we consider the choice $K\equiv 0$ over a short window of time, $[0,\epsilon)$. As we explain below, the conditions of the theorem will be fulfilled at $t=\epsilon$, and thence we apply \cite[Theorem 3]{CheGeoPav14b}.

We claim that the solution to \eqref{eq:covdifference} with $K\equiv 0$, namely
    \begin{equation}\label{eq:zerogain}
        \dot{\hat\Sigma}=A\hat{\Sigma}+\hat{\Sigma}A'+LDD'L'  \mbox{ with }\hat\Sigma(0)=0,
    \end{equation}
satisfies that $\hat{\Sigma}(t)>0$ for any $t>0$.
%With this claim we can obtain a feasible control strategy by letting $K(t)=0$ for $t\in[0,\epsilon]$ for some $\epsilon\in(0,T)$ and applying the result in~\cite{CheGeoPav14b} on the interval $[\epsilon, T]$. We next prove this claim.
To see that this is true, first note that $\hat{\Sigma}:=\Sigma-P$ where $P$ satisfies \eqref{eq:Riccati} and $\Sigma$ satisfies
    \begin{equation}\label{eq:covariance}
        \dot{\Sigma}=A\Sigma+\Sigma A'+B_1B_1'.
    \end{equation}
%Note $\Sigma\ge P>0$ since $\hat\Sigma\ge 0$.
Hence, both $\Sigma(\cdot)$ and $P(\cdot)$ are continuously differentiable functions with the same initial value $\Sigma(0)=P(0)=\Sigma_0$ and therefore, $\hat\Sigma(\epsilon)=\Sigma(\epsilon)-P(\epsilon)$ is of order $O(\epsilon)$ for small $\epsilon>0$. Rewrite \eqref{eq:covariance} and \eqref{eq:Riccati} as
    \[
        \dot{\Sigma}^{-1}=-A'\Sigma^{-1}-\Sigma^{-1}A-\Sigma^{-1}B_1B_1'\Sigma^{-1},
    \]
    \[
        \dot{P}^{-1}=-A'P^{-1}-P^{-1}A-P^{-1}B_1B_1'P^{-1}+C'(DD')^{-1}C.
    \]
It follows that
    \begin{align}\nonumber
        e^{A't}(P(t)^{-1}-\Sigma(t)^{-1})e^{At} &= \int_0^t e^{A'\tau}C'(DD')^{-1}Ce^{A\tau}d\tau\\
        &\hspace{-1cm}-\int_0^t M(\tau) d\tau \label{eq:Mtau}
    \end{align}
    where
     \begin{align*}
       M(\tau)&= e^{A'\tau}\left(P(\tau)^{-1}B_1B_1'P(\tau)^{-1}\right.\\
       &\left.\phantom{xxxxxx}-\Sigma(\tau)^{-1}B_1B_1'\Sigma(\tau)^{-1}\right)e^{A\tau}
    \end{align*}
    is differentiable and satisfies $M(0)=0$.
Thus, the second term on the right hand side of equation \eqref{eq:Mtau} is of order $O(t^2)$ for small $t$
while the first term,
\[
\int_0^te^{A'\tau}C'(DD')^{-1}Ce^{A\tau}d\tau
\]
is of order $O(t)$. Moreover, this first term is strictly positive for any $t>0$ since
$(A, C)$ is observable.
Therefore,
    \[
        e^{A't}(P(t)^{-1}-\Sigma(t)^{-1})e^{At}>0
    \]
for $t>0$ and small enough. We conclude that $\Sigma(t)>P(t)$, that is $\hat{\Sigma}(t)>0$, for small $t$. The fact that $\hat\Sigma(t)>0$ for larger $t$ as well readily follows from \eqref{eq:zerogain}.
%We want a $K(\cdot)$ such that the corresponding $\hat{\Sigma}(t)\ge 0, \forall t\in [0,T]$, and $\hat{\Sigma}(0)=0, \hat{\Sigma}(T)=\Sigma(T)-P(T)>0$. Based on the result in our previous work \cite{CheGeoPav14b}, such $K(\cdot)$ always exists if the initial condition is $\hat{\Sigma}(0)>0$ instead of $\hat{\Sigma}(0)=0$. To overcome this problem, choose a very small $\epsilon>0$ and let $K(t)=0$ for $t\in [0,\epsilon]$. We will see with this control $\hat{\Sigma}(\epsilon)>0$. Since $\epsilon$ is small, equation \eqref{eq:covdifference} can be approximated by
%    \[
%        \dot{\hat\Sigma}=A\hat{\Sigma}+\hat{\Sigma}A'+L(0)DD'L(0)'.
%    \]
%Then $\hat{\Sigma}(\epsilon)>0$ {\color{red} follows from the fact that $[A,L(0)D]$ is controllable. (This is not true, need another way to show $\hat{\Sigma}(\epsilon)>0$.)}
%Now we can apply the result in Linear B in the time window $[\epsilon, T]$ and the conclusion therefore follows.
\end{proof}

\subsection{Sufficient conditions for optimality}

So far, we have established
that provided $\Sigma_T>P(T)$ as in Theorem \ref{thm:thm1},
it is possible to steer \eqref{eq:dynamics} from the initial probability density $\rho_0$ to the ``target'' final probability density $\rho_T$. Further, it is easy to also see from the constructive proof
of \cite[Theorem 3]{CheGeoPav14b} that steering can be effected with a finite energy control. Thus, from now on, we denote by
 $\mathcal U$ the (non-empty) family of admissible control processes $u$, that is, processes that are adapted\footnote{depend on $t$ and on $\{y^u(s)\mid  0\le s\le t\}$ for each $t\in [0,T]$} to the output process, have finite energy, and effect the steering of \eqref{eq:dynamics} from $\rho_0$ to $\rho_T$.
With the benefit of having resolved the controllability question, we now focus on minimum-energy steering, namely the following.

\begin{problem}\label{formalization}  Determine $u^*$ that minimizes
\begin{equation}\label{finiteenergy}
J(u):=\E\left\{\int_0^Tu(t)' u(t) \,dt\right\}<\infty,
\end{equation}
over all $u\in\mathcal U$, i.e., over adapted inputs that steer the system from state-covariance $\Sigma_0$ to $\Sigma_T$.
\end{problem}

Since $\Sigma_T$ is the covariance of $x(T)$ which is already specified, Problem \ref{formalization} is equivalent to minimizing
    \[
        \tilde{J}(u)=\E\left\{\int_0^Tu(t)' u(t) \,dt\right\}+\E \{x(T)'\Pi(T)x(T)\}
    \]
over all $u\in\cU$. This observation allows us to identify the form of an optimal control strategy. The reasoning is as follows.
Without the terminal constraint on $u$ to meet the end-point state density, it is standard that an $y$-adapted, finite-energy control, minimizing $\tilde J$ is of the form
    \begin{equation}\label{eq:feedback}
        u(t)=-B'\Pi(t)\hat{x}(t),
    \end{equation}
where $\hat{x}(t)$ is the Kalman estimation of $x(t)$ and $\Pi(t)$ satisfies the Riccati equation
\begin{equation}\label{eq:riccati}
\dot{\Pi}(t)=-A(t)'\Pi(t)-\Pi(t)A(t)+\Pi(t)B(t)B(t)'\Pi(t)
\end{equation}
with boundary value $\Pi(T)$ at $t=T$ (see e.g., \cite{GeoLin13}).
Therefore, provided a suitable choice of $\Pi(T)$ can be found so that the controlled state $x^{u^*}(T)$ at $t=T$ has covariance $\Sigma_T$,
% the solution of the differential equation \eqref{eq:covdifference} with zero initial condition satisfies $\hat{\Sigma}(T)=\Sigma_T-P(T)$, then the
this control strategy \eqref{eq:feedback} is the solution to Problem \ref{formalization}. We summarize this conclusion as follows.

\begin{thm}\label{thm:sufficient}
Let $\Pi(\cdot)$ and $P(\cdot)$ be solutions of the Riccati differential equations satisfy \eqref{eq:riccati} and \eqref{eq:Riccati}, respectively, and let $\hat\Sigma(\cdot)$ satisfy
    \begin{eqnarray*}
        \dot{\hat\Sigma}(t)&=&(A-BB'\Pi(t))\hat\Sigma(t)+\hat\Sigma(t)(A-BB'\Pi(t))'
        \\&&\hspace{.2cm}+L(t)DD'L(t)'
    \end{eqnarray*}
with boundary conditions $\Pi(T)$, $P(0)=\Sigma_0$, and $\hat\Sigma(0)=0$.
If $\hat\Sigma(T)=\Sigma_T-P(T)$, then the control law \eqref{eq:feedback} solves Problem \ref{formalization}.
\end{thm}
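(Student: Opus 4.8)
The plan is to convert the constrained minimum-energy problem into an unconstrained linear-quadratic-Gaussian problem by exploiting that, over the admissible class $\cU$, the terminal covariance is already pinned to $\Sigma_T$. Concretely, for \emph{any} symmetric $\Pi(T)$ and any $u\in\cU$ the zero-mean Gaussian state at $t=T$ satisfies $\E\{x(T)'\Pi(T)x(T)\}=\tr(\Pi(T)\Sigma_T)$, a constant independent of $u$. Hence on $\cU$ the two functionals obey $\tilde J(u)=J(u)+\tr(\Pi(T)\Sigma_T)$, so they differ only by a constant and a minimizer of one over $\cU$ is automatically a minimizer of the other. This is the reduction that makes the terminal penalty harmless.

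Next I would invoke the LQG separation principle for the \emph{unconstrained} problem. Dropping the terminal-density requirement and minimizing $\tilde J$ over all $y$-adapted finite-energy controls is the classical partial-observation linear-quadratic problem whose solution (see \cite{GeoLin13}) is the certainty-equivalence feedback $u^*(t)=-B'\Pi(t)\hat x(t)$, with $\Pi$ solving the control Riccati equation \eqref{eq:riccati} with terminal value $\Pi(T)$ and $\hat x$ the Kalman estimate. This step identifies the \emph{form} of the optimizer over the large class and thereby reduces the whole matter to checking whether a suitable free choice of $\Pi(T)$ lets $u^*$ also meet the terminal-covariance constraint.

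I would then verify that $u^*$ lies in $\cU$. Substituting the gain $K=B'\Pi$ into equation \eqref{eq:covdifference} from the proof of Theorem~\ref{thm:thm1} shows that, under the feedback $u^*$, the covariance $\hat\Sigma=\Sigma-P$ of the Kalman estimate obeys precisely the differential equation displayed in the statement, with $\hat\Sigma(0)=0$. The hypothesis $\hat\Sigma(T)=\Sigma_T-P(T)$ then gives $\Sigma(T)=\hat\Sigma(T)+P(T)=\Sigma_T$, so the closed-loop state $x^{u^*}(T)$ is zero-mean Gaussian with covariance $\Sigma_T$, i.e.\ has density $\rho_T$. Together with the evident $y$-adaptedness and the finiteness of $\E\int_0^T|u^*|^2dt=\int_0^T\tr\!\big(B'\Pi\hat\Sigma\Pi B\big)\,dt$ over the finite horizon, this places $u^*\in\cU$.

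Finally I would close by the superset principle: $u^*$ minimizes $\tilde J$ over the \emph{larger} class of all $y$-adapted finite-energy controls and it belongs to the \emph{smaller} class $\cU$, hence it minimizes $\tilde J$ over $\cU$; by the constant-offset observation it therefore minimizes $J$ over $\cU$, which is exactly Problem~\ref{formalization}. The main obstacle is the separation-principle step, but as it is classical and cited I would treat it as given; the genuinely load-bearing idea is the constant-offset reduction combined with the freedom to tune $\Pi(T)$ so that the unconstrained LQG optimizer simultaneously satisfies the terminal constraint.
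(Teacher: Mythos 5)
Your proposal is correct and follows essentially the same route as the paper: the constant-offset observation (that $\E\{x(T)'\Pi(T)x(T)\}=\tr(\Pi(T)\Sigma_T)$ is fixed on $\cU$), the appeal to the classical partial-observation LQG result of \cite{GeoLin13} for the unconstrained minimizer $u^*=-B'\Pi\hat x$, and the superset argument that an unconstrained minimizer lying in $\cU$ solves the constrained problem. Your write-up is in fact somewhat more explicit than the paper's, which presents this reasoning informally in the discussion preceding the theorem, including your verification via \eqref{eq:covdifference} with $K=B'\Pi$ that the hypothesis $\hat\Sigma(T)=\Sigma_T-P(T)$ forces $\Sigma(T)=\Sigma_T$.
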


Theorem \ref{thm:sufficient} provides a sufficient condition for a control $u\in\cU$ to be a solution of Problem \ref{formalization}. The boundary condition $\Pi(T)$ in the statement of the theorem is not specified by the data of the problem; it only needs to be a symmetric matrix and not necessarily positive semi-definite. Thus, the statement of the theorem suggests a shooting method to iterate on the correspondence $\Pi(T)\mapsto \hat\Sigma(T)$ as an approach for obtaining optimal control laws. In general, an optimal solution may not exist and thus, this type of a computation approach that Theorem \ref{thm:sufficient} naturally lends itself to, requires a detail investigation.

However, since for any $\Sigma_T>P(T)$, Problem \ref{formalization} is always feasible and therefore, suboptimal solutions exists (with cost arbitrarily close to $\inf_{u\in\mathcal U}J(u)$). A possible computational approach to construct such suboptimal solutions is given next.

\subsection{Numerical optimization scheme}

We follow steps that are analogous to our recent work \cite{CheGeoPav14b} on minimum-energy steering via state-feedback.
Herein, we consider the control-energy functional
    \begin{eqnarray*}
        J(u)&=&\mE\left\{\int_0^T(K(t)\hat{x}(t))'(K(t)\hat{x}(t))\,dt\right\}\\
        &=& \int_0^T \tr(K(t)(\Sigma(t)-P(t))K(t)')dt\\
        &=& \int_0^T \tr(K(t)\hat{\Sigma}(t)K(t)')dt
    \end{eqnarray*}
to be minimized over $K(t)$ so that \eqref{eq:covdifference} holds as well as the boundary conditions
    \begin{equation}\label{eq:SDPconstraint1}
        \hat{\Sigma}(0)=0,~\mbox{and}~ \hat{\Sigma}(T)=\Sigma(T)-P(T).
    \end{equation}
Let $U(t)=-\hat{\Sigma}(t)K(t)'$.
The objective function becomes\footnote{Note that, as indicated earlier, $\hat{\Sigma}(t)>0$ for $t\in (0,T]$.}
    \[
        J(u)= \int_0^T \tr(U(t)'\hat{\Sigma}(t)^{-1}U(t))dt,
    \]
which is jointly convex in $U(\cdot)$ and $\hat{\Sigma}(\cdot)$. The constraint \eqref{eq:covdifference} also becomes linear in $U$, namely,
    \begin{equation}\label{eq:SDPconstraint2}
        \dot{\hat\Sigma}=A\hat{\Sigma}+\hat{\Sigma} A'+LDD'L'+BU'+UB'.
    \end{equation}
Optimizing $J(u)$ can now be recast as the semi-definite program to minimize
    \[
        \int_0^T \tr(Y(t))dt
    \]
subject to \eqref{eq:SDPconstraint1}-\eqref{eq:SDPconstraint2} and
    \begin{equation*}
    \left[\begin{matrix}Y(t)& U(t)' \\U(t) & \hat{\Sigma}(t)\end{matrix}\right]\ge 0.
    \end{equation*}
    This can now be solved numerically via discretization in time and space.
A (suboptimal) control feedback gain $K(\cdot)$ then can be recovered by $K(t)=-U(t)'\hat{\Sigma}(t)^{-1}$.

\section{Infinite horizon steering}

We now consider the stationary counterpart of our problem to ensure a terminal state-distribution by output feedback.
%{\mmike NOTES TO US: HERE we may want to explain that there is no advantage in seeking a control law that is of dimension higher than that of the system and that whatever can be achieved by dynamic output feedback, i.e., whatever $\Sigma$, it can also be achieved with ``Kalman filter $+$ state feedback.'' Need to think how much we should put... It could be brief. In principle, we could start with a general form for a finite-dimensional controller $d\xi=A_c\xi dt+ B_c dy,\; u=K\xi$ with $\xi$ having any size.}

%In this section we study the optimal steering problem. That is, among all the feasible controllers, we want the one minimizing
%    \[
%        J(u):=\mE\left\{\int_0^Tu(t)' u(t) \,dt\right\}
%    \]
%for finite-horizon case and
%    \[
%        J_{\rm power}(u):=\mE\{u'u\}
%    \]
%for infinite-horizon case.

\subsection{Feasibility and characterization of stationary statistics}

Consider the stationary Kalman filter
    \[
        d\hat{x}(t)=A\hat{x}(t)dt+Bu(t)+L(dy-C\hat{x}dt).
    \]
As usual, the Kalman gain is $L=PC'(DD')^{-1}$ where
$P$ is the covariance of the estimation error $\tilde{x}(t)=x(t)-\hat{x}(t)$
and satisfies the Algebraic Riccati Equation (ARE)
    \begin{equation}\label{eq:ARE}
        AP+PA'+B_1B_1'-PC'(DD')^{-1}CP=0.
    \end{equation}
 It is a direct consequence of optimality of the Kalman filter, just as in \eqref{eq:optimalityofKF} for the finite interval case,
 that for any linear (dynamical and causal) control scheme that ensures stationarity, the covariance $\Sigma$ of the state vector must satisfy
\begin{equation}\label{eq:necessary2}
\Sigma\geq P.
\end{equation}
For any such input, define $S_{ux}:=\mE\{u(t)x(t)'\}$ ($=S_{xu}'$).
Standard It\^o calculus gives
\begin{align*}
d (x(t)x(t)') = &\;(A x(t)x(t)' + x(t)x(t)' A' +B_1B_1')dt\\
&+ (B u(t)x(t)' + x(t)u(t)' B')dt\\
& +
B_1dw(t)x(t)' + x(t)dw(t)' B_1'.
\end{align*}
By taking the expectation we obtain
\begin{align*}
0 = & A\Sigma + \Sigma A'+ B_1B_1'+B S_{ux} +S_{ux}'B',
\end{align*}
and therefore, for any feasible state covariance $\Sigma$,
\begin{subequations}\label{eq:equivalent}
\begin{eqnarray}\label{eq:lyapunov3}
&&A\Sigma + \Sigma A'+B_1B_1'+BX'+XB'=0\\
&&\mbox{can be solved for $X$}.\nonumber
\end{eqnarray}
Condition \eqref{eq:lyapunov3} can be equivalently expressed as:
\begin{equation}\label{eq:rank2}
{\rm rank}\left[\begin{matrix}
A\Sigma+\Sigma A'+B_1B_1' & B\\
B & 0
\end{matrix}\right]
=
{\rm rank}\left[\begin{matrix}
0 & B\\
B & 0
\end{matrix}\right].
\end{equation}
and ensures that $A\Sigma + \Sigma A'+B_1B_1'$ is in the range of the linear map  $X\mapsto BX'+XB'$, cf.\  \cite[Proposition 1]{Geo02a}. See also  \cite{HotSke87} for an alternative but equivalent condition in terms of $A\Sigma + \Sigma A'+B_1B_1'$ belonging to the kernel of a suitable operator.
\end{subequations}
We summarize our conclusion as follows.

\begin{thm}\label{thm:admissiblestate3earlier}
If a positive-definite matrix $\Sigma\geq P$ can be assigned as
the stationary state covariance of \eqref{eq:dynamics} via a suitable choice of feedback control, then $\Sigma$ satisfies any of the equivalent statements (\ref{eq:lyapunov3}-\ref{eq:rank2}).
\end{thm}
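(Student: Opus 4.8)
The plan is to read off the two conditions directly from the stationary second-moment balance, which is obtained by applying It\^o's rule to the outer product $x(t)x(t)'$ along the dynamics \eqref{eq:dynamics}. The hypothesis $\Sigma\geq P$ is already in hand from the optimality of the stationary Kalman filter (the infinite-horizon analogue of the argument \eqref{eq:optimalityofKF} used in the finite-interval case), so the real work lies in producing the Lyapunov relation \eqref{eq:lyapunov3} and then translating it into the rank form \eqref{eq:rank2}.

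First I would compute the stochastic differential of $x(t)x(t)'$. The quadratic variation of the driving noise contributes the It\^o term $B_1B_1'\,dt$, while the drift collects $Axx' + xx'A' + Bux' + xu'B'$; the genuinely stochastic (martingale) part is $B_1\,dw\,x' + x\,dw'\,B_1'$. A key structural point is that this computation never needs to know the internal form of the controller: whether $u$ is static or dynamical and causal, its entire effect on the balance is encoded in the cross term $Bux'+xu'B'$, hence in $S_{ux}:=\mE\{u(t)x(t)'\}$.

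Next I would invoke stationarity. Since the feedback renders the closed loop stationary, $\Sigma=\mE\{x(t)x(t)'\}$ is constant in $t$ and so is $S_{ux}$; taking expectations annihilates the martingale terms and the time-derivative of $\mE\{xx'\}$ vanishes, leaving the algebraic identity
\[
0 = A\Sigma + \Sigma A' + B_1B_1' + B S_{ux} + S_{ux}'B'.
\]
Setting $X:=S_{ux}'=S_{xu}$ exhibits an explicit solution of \eqref{eq:lyapunov3}, which is precisely the claim that this matrix equation is solvable for $X$. The equivalence of \eqref{eq:lyapunov3} with the rank condition \eqref{eq:rank2} is then a standard linear-algebra fact: the affine equation $BX'+XB' = -(A\Sigma+\Sigma A'+B_1B_1')$ is solvable exactly when its right-hand side lies in the range of the linear map $X\mapsto BX'+XB'$, a membership detected by the rank test; here I would simply cite \cite[Proposition 1]{Geo02a} rather than reprove it.

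The main obstacle I anticipate is not the algebra but the justification of the two analytic steps, namely that the stochastic-integral terms have zero expectation and that $S_{ux}$ is well defined and time-invariant under an arbitrary causal, dynamical feedback law. Both rest on the existence of finite, stationary second moments for the joint process $(x,u)$, which must already be in force for the phrase ``stationary state covariance'' to be meaningful; granting those integrability conditions, the interchange of expectation and the vanishing of the time-derivative are routine. I would therefore record these integrability assumptions explicitly and remark that they hold automatically for the admissible stabilizing, finite-power feedback laws under consideration.
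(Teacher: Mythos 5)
Your proposal is correct and follows essentially the same route as the paper: It\^o's rule applied to $x(t)x(t)'$, taking expectations under stationarity to obtain $0=A\Sigma+\Sigma A'+B_1B_1'+BS_{ux}+S_{ux}'B'$, identifying $X=S_{xu}$ as an explicit solution of \eqref{eq:lyapunov3}, and citing \cite[Proposition 1]{Geo02a} for the equivalence with the rank condition \eqref{eq:rank2}. Your explicit flagging of the integrability assumptions (finite stationary second moments justifying the vanishing expectation of the stochastic integrals) is a point the paper leaves implicit, but it does not change the argument.
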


We next discuss the converse direction. In this we explain that
the equivalent conditions \eqref{eq:equivalent} together with $\Sigma>P$ are almost sufficient for $\Sigma$ to be a stationary state covariance, in the sense that a covariance matrix arbitrarily close to $\Sigma$ is admissible.
Moreover, we show that this can be achieved by output feedback that is implemented by a Kalman filter and control $u(t)=-K\hat x(t)$.

We begin by considering the joint dynamics of the system state and estimation error $\tilde{x}(t)=x(t)-\hat{x}(t)$, namely,
    \begin{eqnarray*}
        \left[\begin{array}{c}dx\\d\tilde{x}\end{array}\right]&=&
        \left[\begin{array}{cc}A-BK & BK\\ 0 & A-LC\end{array}\right]
        \left[\begin{array}{c}x\\\tilde{x}\end{array}\right]dt\\&&+
        \left[\begin{array}{c}B_1dw\\B_1dw-LDdv\end{array}\right].
    \end{eqnarray*}
The steady-state state covariance of this system satisfies the algebraic Lyapunov equation
    \begin{eqnarray}\nonumber
    0&=&
    \left[\begin{array}{cc}A-BK & BK\\ 0 & A-LC\end{array}\right]
    \left[\begin{array}{cc}\Sigma & P\\P & P\end{array}\right]\\\nonumber
    &&+
    \left[\begin{array}{cc}\Sigma & P\\P & P\end{array}\right]
    \left[\begin{array}{cc}A-BK & BK\\ 0 & A-LC\end{array}\right]'\\\label{eq:biglyap}
    &&+\left[\begin{array}{cc}B_1B_1' & B_1B_1'\\ B_1B_1' & B_1B_1'+LDD'L'\end{array}\right].
    \end{eqnarray}
It follows that
    \begin{subequations}
    \begin{equation}\label{eq:linearcons}
        A\Sigma+\Sigma A'+B_1B_1'-BK(\Sigma-P)-(\Sigma-P)K'B'=0,
    \end{equation}
    and
    \begin{equation}\label{eq:lyapcons}
        (A-BK)(\Sigma-P)+(\Sigma-P)(A-BK)'+LDD'L'=0.
    \end{equation}
    \end{subequations}
    and therefore, $\Sigma$ satisfies \eqref{eq:lyapunov3} for $K=-X'(\Sigma-P)^{-1}$.
    
    Provided $A-BK$ is a Hurwitz matrix, $\Sigma$ is an admissible stationary covariance.
However, in general, $A-BK$ may fail to be Hurwitz because of imaginary eigenvalues. In this case there is a ``nearby'' admissible stationary state-covariance. This can be shown by adapting a similar argument that was used for the case of state-feedback in \cite[Remark 5]{CheGeoPav14b}. Briefly,
let $\hat{\Sigma}=\Sigma-P>0$, and consider the control
\begin{subequations}\label{eq:scheme}
\begin{equation}\label{eq:scheme1}
K_\epsilon = K + \frac12 \epsilon B' \hat{\Sigma}^{-1}
\end{equation}
for $\epsilon>0$. Then, from \eqref{eq:lyapcons},
\begin{eqnarray*}
(A-BK_\epsilon)\hat{\Sigma} + \hat{\Sigma}(A-BK_\epsilon)'&=&-\epsilon BB'-LDD'L'\\
& \leq& -\epsilon BB'.
\end{eqnarray*}
The fact that $A-BK_\epsilon$ is Hurwitz is now obvious. Let $\Sigma_\epsilon$ be the solution to
\begin{equation}\label{eq:scheme2}
(A-BK_\epsilon)(\Sigma_\epsilon-P) + (\Sigma_\epsilon-P) (A-BK_\epsilon)'=-LDD'L'.
\end{equation}
\end{subequations}
Then, the difference $\Delta=\Sigma-\Sigma_\epsilon\geq 0$ satisfies
\begin{eqnarray*}
(A-BK_\epsilon)\Delta + \Delta(A-BK_\epsilon)'&=&-\epsilon BB',
\end{eqnarray*}
and hence is of order $O(\epsilon)$. Thus, the algebraic condition
that $\Sigma$ satisfies \eqref{eq:equivalent} and the positivity constraint $\Sigma>P$ together, are in effect {\em sufficient} in this approximate sense that we just explained since $\Sigma_\epsilon$ is an admissible state covariance.

%We summarize our conclusions below.
%    \begin{thm}\label{thm:admissiblestate3}
%%If a positive-definite matrix $\Sigma>P$ can be assigned as
%%A positive-definite matrix $\Sigma$ is admissible as the stationary state covariance of \eqref{eq:dynamics} via a suitable choice of feedback control $u(t)=-K\hat{x}(t)$, then $\Sigma$ satisfies any of the equivalent statements (\ref{eq:lyapunov3}-\ref{eq:rank2}).
%Let  $K$ be a feedback gain such that $A-BK$ is a Hurwitz matrix and let $P$ be the stabilizing solution of the Algebraic Riccati equation \eqref{eq:ARE}. Let
%$\Sigma$ be the steady state covariance of
%\eqref{eq:dynamics} with control input
%$u(t)=-K\hat{x}(t)$ and let $\hat x$ the state-estimate provided by the steady-state Kalman filter.
%Then
%\begin{itemize}
%\item[i)] $\Sigma\geq P$, and
%\item[ii)] $\Sigma$ satisfies the equivalent conditions (\ref{eq:lyapunov3}-\ref{eq:rank2}).
%\end{itemize}
%\end{thm}
%\begin{remark}\label{rem:perturbation}
%In the converse direction, given $\Sigma>P$ that satisfies \eqref{eq:equivalent} and a solution $X$ of \eqref{eq:lyapunov3},
%then \eqref{eq:biglyap} holds with $K=-X'(\Sigma-P)^{-1}$. \end{remark}

\subsection{Conditions for optimality}
In general, since there may be more than one solution, we focus on one that minimizes the expected input power (energy rate)
\begin{eqnarray}\label{eq:power}
J_{\rm power}(u)&:=&\E\{u'u\}.
\end{eqnarray}
Thus, assuming feasibility for a specified state covariance $\Sigma$ we consider the following problem.
\begin{problem}\label{problem2}
Determine $u^*$ that minimizes\footnote{Or, equivalently, a $u$ that minimizes
$\lim_{T\to \infty}\frac{1}{T}\E\left\{\int_0^Tu(t)'u(t)dt\right\}$.} $J_{\rm power}(u)$ over all $u(t)=-K\hat{x}(t)$ such \begin{equation}\label{eq:invdensity}
\rho(x)=(2\pi)^{-n/2}\det (\Sigma)^{-1/2}\exp\left(-\frac{1}{2}x'\Sigma^{-1}x\right)
\end{equation}
is the stationary distribution for the state vector.
\end{problem}

Problem \ref{problem2} admits the following finite-dimensional reformulation. Let $\mathcal K$ be the set of all $m\times n$ matrices $K$ such that the corresponding feedback matrix $A-BK$ is Hurwitz. Since $\E\{\hat{x}\hat{x}'\}=\hat{\Sigma}$,
\[
\E\{u'u\}=\E\{\hat{x}'K'K\hat{x}\}=\tr(K\hat{\Sigma} K'),
\]
and Problem \ref{problem2} reduces to finding a $K\in\mathcal K$ which minimizes
\begin{equation}\label{eq:criterion}
J(K)=\tr\left(K\hat{\Sigma} K'\right)
\end{equation}
subject to the constraint \eqref{eq:linearcons}.
Now, consider the Lagrangian
 \begin{eqnarray}
 \mathcal{L}(K,\Pi)&=&\tr\left(K\hat{\Sigma} K'\right)\\\nonumber&&\hspace*{-2.2cm}+\tr\left(\Pi((A-BK)\hat{\Sigma}+\hat{\Sigma}(A'-K'B')+LDD'L')\right).
 \end{eqnarray}
Note that since $\mathcal K$ is {\em open}, a minimum point may fail to exist.
Standard variational analysis leads to the form
$
K=B'\Pi
$
for the optimal gain. This analysis provides the following sufficient condition for optimality.

\begin{prop}\label{prop:prop1}
Assume that there exists a symmetric matrix $\Pi$ such that $A-BB'\Pi$ is a Hurwitz matrix and
\begin{equation}\label{eq:sigmastat}
(A-BB'\Pi)\hat{\Sigma}+\hat{\Sigma}(A-BB'\Pi)'+LDD'L'=0
\end{equation}
holds.
Then,
\begin{equation}\label{eq:statoptcontr}
u^*(t)=-B'\Pi \hat{x}(t)
\end{equation}
is a solution to Problem \ref{problem2}.
\end{prop}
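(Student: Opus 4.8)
The plan is to prove that the stated conditions are \emph{sufficient} for optimality by a direct completion-of-squares comparison, exploiting the symmetry of $\Pi$ to annihilate the cross term. This sidesteps the fact that the feasible set $\mathcal K$ of stabilizing gains is open and nonconvex, so one cannot simply invoke convexity of $J$ together with a first-order (KKT) stationarity condition. First I would record that the candidate gain $K^{*}:=B'\Pi$ is feasible: by hypothesis $A-BB'\Pi$ is Hurwitz, so $K^{*}\in\mathcal K$, and \eqref{eq:sigmastat} is precisely the stationarity constraint \eqref{eq:lyapcons} evaluated at $K=K^{*}$. Hence the closed-loop error covariance equals the prescribed $\hat\Sigma$, the state covariance is $\Sigma=\hat\Sigma+P$, and $u^{*}=-B'\Pi\hat{x}$ indeed renders \eqref{eq:invdensity} stationary. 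Since $\Sigma>P$ we have $\hat\Sigma=\Sigma-P>0$, which will be used for definiteness.

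Next I would take an arbitrary competing gain $K\in\mathcal K$ that is \emph{also} feasible, i.e.\ that achieves the same target $\hat\Sigma$ and therefore satisfies $(A-BK)\hat\Sigma+\hat\Sigma(A-BK)'+LDD'L'=0$ with the \emph{same} $\hat\Sigma$ as in \eqref{eq:sigmastat}. Subtracting \eqref{eq:sigmastat} from this equation cancels the $A\hat\Sigma+\hat\Sigma A'$ and $LDD'L'$ terms, leaving, with $\Delta:=K-K^{*}$,
\[
B\Delta\hat\Sigma+\hat\Sigma\Delta'B'=0 .
\]
Thus $M:=B\Delta\hat\Sigma$ satisfies $M'=-M$, i.e.\ $M$ is skew-symmetric. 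This single algebraic identity is the crux of the whole argument.

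I would then expand the objective $J(K)=\tr(K\hat\Sigma K')$ about $K^{*}$. Writing $K=K^{*}+\Delta$ and using the symmetry of $\hat\Sigma$ to merge the two cross terms gives
\[
J(K)-J(K^{*})=2\,\tr\bigl(\Delta\hat\Sigma (K^{*})'\bigr)+\tr\bigl(\Delta\hat\Sigma\Delta'\bigr).
\]
The last term is nonnegative because $\hat\Sigma>0$, with equality only when $\Delta=0$. For the cross term, substitute $(K^{*})'=(B'\Pi)'=\Pi B$ (here $\Pi=\Pi'$ is essential) and cycle the trace: $\tr(\Delta\hat\Sigma\Pi B)=\tr(\Pi\,B\Delta\hat\Sigma)=\tr(\Pi M)$. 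Since $\Pi$ is symmetric and $M$ is skew-symmetric, $\tr(\Pi M)=0$. Therefore $J(K)-J(K^{*})=\tr(\Delta\hat\Sigma\Delta')\ge 0$, so $K^{*}=B'\Pi$ is the (unique) minimizer over all feasible gains and $u^{*}=-B'\Pi\hat{x}$ solves Problem~\ref{problem2}.

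The main obstacle, and the reason a lazier convexity-plus-stationarity argument fails, is exactly the nonconvexity and openness of $\mathcal K$; the completion-of-squares comparison circumvents it by pitting $K^{*}$ only against other \emph{feasible} gains and invoking nothing beyond the shared affine stationarity constraint and the symmetry of the Lagrange multiplier $\Pi$. I would also note in passing that, because $P$ solves the ARE \eqref{eq:ARE} (so that $LDD'L'=PC'(DD')^{-1}CP$), the constraint \eqref{eq:linearcons} written in terms of $\Sigma$ is equivalent to the error-covariance form \eqref{eq:lyapcons}/\eqref{eq:sigmastat} used above, so that a ``feasible competitor'' may be characterized through either equation.
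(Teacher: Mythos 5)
Your proof is correct, and it is actually more complete than what the paper itself supplies. The paper attaches no formal proof to Proposition \ref{prop:prop1}: it introduces the Lagrangian $\mathcal{L}(K,\Pi)$, asserts that ``standard variational analysis'' yields the stationarity condition $K=B'\Pi$, and then states the proposition as the resulting sufficient condition, leaving the step from first-order stationarity to global optimality implicit. Your completion-of-squares argument supplies exactly that missing step: restricting attention to feasible competitors (gains in $\mathcal K$ satisfying \eqref{eq:lyapcons} with the same $\hat\Sigma$), the difference of the two constraint equations forces $M=B\Delta\hat\Sigma$ to be skew-symmetric, the cross term $\tr(\Pi M)$ then vanishes because $\Pi$ is symmetric, and $J(K)-J(K^*)=\tr(\Delta\hat\Sigma\Delta')\ge 0$. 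This is in effect an explicit unrolling of the Lagrangian sufficiency argument that the paper gestures at (for feasible $K$ one has $J(K)=\mathcal{L}(K,\Pi)$, and $\mathcal{L}(\cdot,\Pi)$ is convex quadratic in $K$ with stationary point $K^*$), but your version is self-contained and makes transparent why the openness and nonconvexity of $\mathcal K$ cause no trouble. Two minor remarks: the strict positivity $\hat\Sigma>0$ you invoke is needed only for the parenthetical uniqueness claim, since the inequality $\tr(\Delta\hat\Sigma\Delta')\ge 0$ requires only $\hat\Sigma\ge 0$, which already follows from \eqref{eq:sigmastat} because $A-BB'\Pi$ is Hurwitz and $LDD'L'\ge 0$; and your closing observation that \eqref{eq:linearcons} and \eqref{eq:lyapcons} are equivalent via the ARE \eqref{eq:ARE} is indeed what ties your notion of a feasible competitor to the paper's finite-dimensional reformulation of Problem \ref{problem2}.
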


\subsection{Minimum energy control}
In a similar manner as before, we next provide a numerical scheme to compute a solution to Problem \ref{problem2}. The average input power (energy rate) is
\begin{eqnarray}
\E\{u'u\}
&=&\tr(K\hat{\Sigma} K')
\\\nonumber
&=&\tr(X'\hat{\Sigma}^{-1}X)
\end{eqnarray}
in either $K$, or $X$.
Thus, the optimal constant feedback gain $K$ can be obtained by solving the convex optimization problem
\begin{equation}\label{eq:XSX}
\min\left\{\tr(X'\Sigma^{-1}X)\mid \mbox{\eqref{eq:lyapunov3} holds } \right\}.
\end{equation}
After obtaining the optimal $K$ from \eqref{eq:XSX}, we need to check whether $A-BK$ is Hurwitz or not. If not, the scheme in \eqref{eq:scheme} can be applied to approximate the solution.

\begin{figure}[htb]\begin{center}
\includegraphics[width=0.49\textwidth]{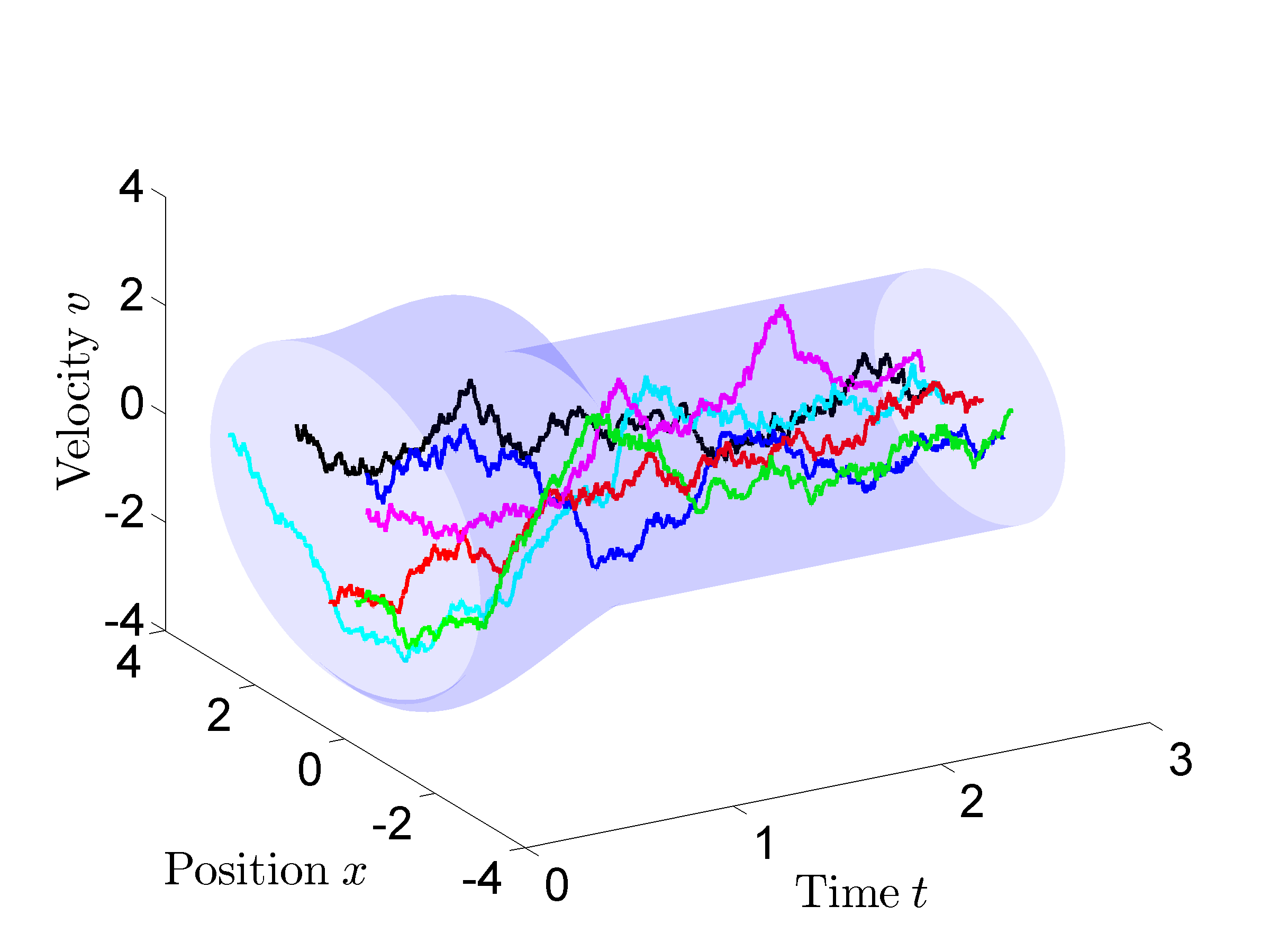}
   \caption{State trajectories in phase space}
   \label{fig:Eg1Phase1}
\end{center}\end{figure}
\begin{figure}[htb]\begin{center}
\includegraphics[width=0.49\textwidth]{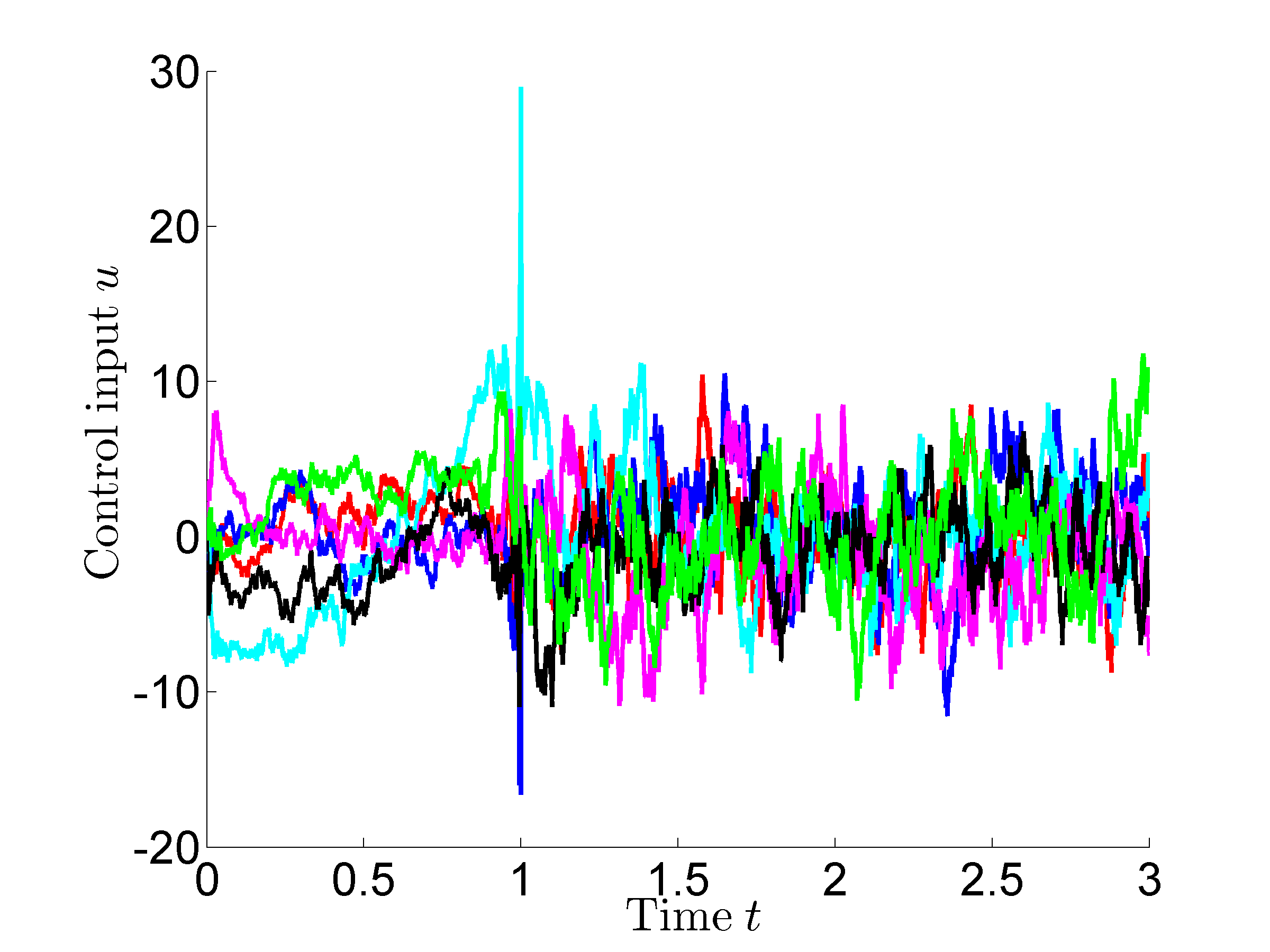}
   \caption{Control input}
   \label{fig:Eg1Control1}
\end{center}\end{figure}
\section{Numerical example}
We consider motion of particles modeled by
   \begin{eqnarray*}
       dx_1(t) &=& x_2(t)dt\\
       dx_2(t) &=& u(t)dt+dw(t)\\
       dy(t) &=& x_1(t)dt+0.1dv(t)
   \end{eqnarray*}
Here, $u(t)$ represents a control input (force) at our disposal, $x_1(t)$ represents position, $x_2(t)$ velocity, $y(t)$ noisy (integral) position measurements, while $dw(t)$ models random white-noise forcing and $dv(t)$ represents measurement noise.
Our goal is to steer the spread of the particles from an initial Gaussian distribution with $\Sigma_0=I$ at $t=0$ to a terminal marginal $\Sigma_1=\frac{1}{2}I$ at $t=1$, and to maintain the particles' distribution constant after $t=1$. For the steering part on the interval $[0,\,1]$, $\Sigma_1$ is feasible since $\Sigma_1>P(1)$, where
    \[
        P(1)=\left[\begin{matrix} 0.0471 & 0.1049\\ 0.1049 & 0.4587\end{matrix}\right]
    \] 
is the estimation error of Kalman filter at $t=1$. For the maintaining part, it is possible since $\Sigma_1$ satisfies \eqref{eq:lyapunov3} with $X=[-1/2,\,-1/2]'$, and $\Sigma_1$ is greater than the solution of the Algebraic Riccati Equation \eqref{eq:ARE}, which is,
    \[
        P=\left[\begin{matrix} 0.0447 & 0.1000\\ 0.1000 & 0.4472\end{matrix}\right].
    \]
Moreover, the corresponding feedback gain $K=[5.4440,\,19.7854]$ makes $A-BK$ be Hurwitz. 

We now implement the time-varying output feedback as explained in Section \ref{sec:finitehorizon} to steer the distribution of particles over the interval $[0,\,1]$ and from there on, we implement the stationary control consisting of the stationary Kalman filter and the above constant gain.
Figure~\ref{fig:Eg1Phase1} displays typical sample paths in phase space, as functions of time, over the time window $[0,\,3]$, and Figure \ref{fig:Eg1Control1} displays the corresponding (color coded) control signals, $u(t)=-K(t)\hat{x}(t)$.

\section{Concluding remarks}

We have addressed the problem of steering the state statistics of a linear stochastic system via output feedback.
In this case, where only partial state observation is available, we have provided
necessary and sufficient conditions for being able to specify a terminal Gaussian distribution for the state vector as well as
a stationary Gaussian distribution. The paper builds on our recent work \cite{CheGeoPav14a,CheGeoPav14b} where we studied the problem to steer state statistics via state feedback. The viewpoint presented herein differs from standard Linear Quadratic Regulator theory \cite{FR,Astrom} in that the control objective is specified directly in terms of terminal or stationary distributions for the state vector. Applications of this viewpoint are envisioned in areas where a distribution rather than a set of values for the state vector is a natural specification, e.g., in quality control, industrial and manufacturing processes, as well as in thermally driven atomic force microscopy,
the control of molecular motors, laser driven reactions, manipulation of macromolecules, and so on, see e.g. \cite{toyabe2010nonequilibrium,gannepalli2005thermally,braiman2003control,hayes2001active,ricci2014low}. Future work is expected to focus on such applications based on this framework.

%\vspace*{.2in}
\spacingset{.95}
\bibliographystyle{IEEEtran}
\bibliography{../refs}
\end{document}